\begin{document}\newtheorem{thm}{Theorem}
\newtheorem{cor}[thm]{Corollary}
\newtheorem{lem}{Lemma}
\theoremstyle{remark}\newtheorem{rem}{Remark}
\theoremstyle{definition}\newtheorem{defn}{Definition}
%%%%%%%%%%%%%%%%%%%%%%%%%
\title{A Sufficient Condition For An Operator To Map $uL^\infty$ to ${\rm{BMO}}_u$}
\author{Sakin Demir}
%\date{November 05, 2016}
\maketitle
\begin{abstract} Let $T$ be an operator and suppose that there exists a positive constant $C$  such that
$$\left(\int_I|Tf(x)|^q\, dx\right)^{1/q}\leq C\left(\int_I|f(x)|^q\, dx\right)^{1/q}$$
for every $q$ which is near enough to $1$ and for every interval $I$ in $\mathbb{R}$ and $f\in L^{\infty}(\mathbb{R})$. Then we show that $T$ maps $uL^{\infty}$ to ${\rm{BMO}}_u$.
\end{abstract}
\centerline{}
{\bf{Mathematics Subject Classification:}} 47B38.\\
{\bf{Keywords:}} Weighted ${\rm{BMO}}$ Space, $L^\infty$ Space. \\
\centerline{}
\noindent

We say that a non-negative function $w\in L_{{\rm{loc}}}^1(\mathbb{R})$ is an $A_p$ weight for some $1<p<\infty$ if
\begin{align*}
\sup_I\left[\frac{1}{|I|}\int_Iw(x)\,dx\right]\left[\frac{1}{|I|}\int_I[w(x)]^{-1/(p-1)}\,dx\right]^{p-1}\leq C<\infty
\end{align*}
The supremum is taken over all intervals $I\subset \mathbb{R}^n$; $|I|$ denotes the measure of $I$.\\
\indent
$w$ is an $A_\infty$ weight if given an interval $I$ there exists $\delta >0$ and $\epsilon >0$ such that for any measurable set $E\subset I$,
\begin{align*}
|E|<\delta\cdot |I|\implies w(E)<(1-\epsilon )\cdot w(I).
\end{align*}
Here 
$$w(E)=\int_Ew(x)\,dx.$$
$w\in A_1$ if
$$I(w)=\frac{1}{|I|}\int_I w(x)\, dx\leq C\,{\rm{ess\, inf}}_I\,w$$
for all intervals $I$.\\
It is well known and can easily be seen that $w\in A_\infty$ implies $w\in A_p$  if $1<p<\infty$.\\
Before presenting our main result  let us first mention some of the well known properties of the functions in $A_p$.
%%%%%%%%%%%%%%%%%%%%%%%%%%%%%%%%%%%%%%%%%%%%%%%%%%%%%%%%%%
\begin{lem}[Reverse Hölder Inequality]\label{rhi} Let $w$ satisfy $A_p$, where $1\leq p<\infty$. Then the inequality
$$\left(\frac{1}{|I|}\int_I(w(x))^{1+\delta }\,dx\right)^{\frac{1}{1+\delta}}\leq C\left(\frac{1}{|I|}\int_Iw(x)\,dx\right)$$
holds for all intervals $I\subset \mathbb{R}$, with constants $C$, $\delta >0$ independent of $I$.
\end{lem}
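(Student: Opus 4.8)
The plan is to deduce the reverse Hölder inequality from the $A_\infty$ property rather than from the literal $A_p$ product condition; since every $A_p$ weight satisfies the $A_\infty$ condition recorded above, this loses nothing. The first and most substantial step is to upgrade the qualitative $A_\infty$ implication into a quantitative, scale-invariant estimate: there exist constants $C_0>0$ and $\eta\in(0,1)$, depending only on the $A_p$ constant of $w$, such that
\begin{equation*}
w(E)\le C_0\left(\frac{|E|}{|I|}\right)^{\eta}w(I)
\end{equation*}
for every interval $I$ and every measurable $E\subset I$. To obtain this I would first note that the $A_p$ condition, via Hölder's inequality applied to $|E|=\int_E w^{1/p}w^{-1/p}\,dx$, yields the dual bound $|E|/|I|\le C(w(E)/w(I))^{1/p}$; feeding $F=I\setminus E$ into this inequality shows that the implication ``$|E|<\delta|I|\Rightarrow w(E)<(1-\epsilon)w(I)$'' holds with $\delta,\epsilon$ independent of $I$. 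Iterating this uniform implication across successive dyadic subdivisions of $I$ then produces the displayed power estimate in the standard Coifman--Fefferman fashion. I expect this self-improvement to be the main obstacle, since it is where the full strength of the $A_p$ hypothesis is consumed; everything afterward is comparatively mechanical.

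Granting the power estimate, I would fix $I$, normalise so that $\frac{1}{|I|}\int_I w\,dx=1$ (both sides of the asserted inequality are homogeneous of degree one in $w$), and pass to the distribution function $\omega(\lambda)=w\big(\{x\in I:w(x)>\lambda\}\big)$, using the layer-cake identity $\int_I w^{1+\delta}\,dx=\delta\int_0^\infty\lambda^{\delta-1}\omega(\lambda)\,d\lambda$. For each height $\lambda\ge 1$ I would run the Calderón--Zygmund decomposition of $w$ on $I$, obtaining disjoint maximal subintervals $\{I_j\}$ with $\lambda<\frac{1}{|I_j|}\int_{I_j}w\,dx\le 2\lambda$ and with $\{w>\lambda\}\cap I$ contained, up to a null set, in $\bigcup_j I_j$.

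The heart of the estimate is then a two-sided comparison between levels. On the one hand, choosing $K$ large and setting $E_j=\{x\in I_j:w>K\lambda\}$, Chebyshev's inequality gives $|E_j|\le(2/K)|I_j|$, whence the power estimate yields $w(E_j)\le C_0(2/K)^{\eta}w(I_j)$; summing over $j$ bounds $\omega(K\lambda)$ by $C_0(2/K)^{\eta}\sum_j w(I_j)$. On the other hand, because the average of $w$ over each $I_j$ exceeds $\lambda$, the portion of $I_j$ on which $w\le\tfrac12\lambda$ carries less than half of the mass $w(I_j)$, whence $w(I_j)\le 2\,w\big(\{x\in I_j:w>\tfrac12\lambda\}\big)$; summing gives $\sum_j w(I_j)\le 2\,\omega(\lambda/2)$. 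Combining the two comparisons produces a contraction $\omega(R\lambda)\le\theta\,\omega(\lambda)$ with $R=2K>1$ fixed and $\theta<1$ once $K$ is taken large enough, and iterating from $\lambda=1$ yields the geometric decay $\omega(\lambda)\le C\lambda^{-\gamma}w(I)$ for $\lambda\ge 1$, with $\gamma>0$. Substituting this into the layer-cake integral, the contribution from $\lambda\le 1$ is at most a constant multiple of $w(I)$, while the tail $\delta\int_1^\infty\lambda^{\delta-1-\gamma}w(I)\,d\lambda$ converges precisely when $\delta<\gamma$ and is again bounded by a constant times $w(I)=|I|$. This gives $\frac{1}{|I|}\int_I w^{1+\delta}\,dx\le C$, and taking $(1+\delta)$-th roots and undoing the normalisation is exactly the claimed inequality. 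A pleasant feature of routing the argument through the decay of $\omega$ is that no a priori finiteness of $\int_I w^{1+\delta}\,dx$ need be assumed, so the usual truncation and absorption bookkeeping is avoided.
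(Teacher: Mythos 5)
Your argument is sound, but note first that the paper does not actually prove this lemma: its ``proof'' consists of a citation to Corollary 2.13 of Garc\'ia-Cuerva and Rubio de Francia~\cite{gcrdf1}, so any genuine proof is necessarily a different route from the paper's. What you have written is essentially the classical Coifman--Fefferman argument, made self-contained, and the details check out: the H\"older duality step $|E|/|I|\le C\bigl(w(E)/w(I)\bigr)^{1/p}$ is correct for $1<p<\infty$ (for $p=1$ argue directly from the essential infimum, or use $A_1\subset A_2$); the Calder\'on--Zygmund selection with $\lambda<\frac{1}{|I_j|}\int_{I_j}w\le 2\lambda$ is the right one-dimensional dyadic statement (valid at heights $\lambda\ge 1$ thanks to your normalisation); and both halves of the level comparison are correct, since $w\bigl(\{w\le\lambda/2\}\cap I_j\bigr)\le\tfrac{\lambda}{2}|I_j|<\tfrac12 w(I_j)$ indeed yields $\sum_j w(I_j)\le 2\,\omega(\lambda/2)$. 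One structural point deserves emphasis, and you half-identified it when you called the power estimate the main obstacle: your scheme genuinely requires the quantitative bound $w(E)\le C_0(|E|/|I|)^{\eta}w(I)$ rather than the bare $\delta$--$\epsilon$ implication, because the latter alone would only give $\omega(K\lambda)\le 2(1-\epsilon)\,\omega(\lambda/2)$, and $2(1-\epsilon)$ need not be below $1$; it is precisely the $K$-dependent factor $(2/K)^{\eta}$ that beats the loss of $2$ from the second comparison. The dyadic iteration you invoke to upgrade the uniform implication to the power estimate does work (stop at maximal dyadic $J$ with $|\Omega_k\cap J|>\tfrac{\delta_0}{2}|J|$, so maximality of $J$ inside its parent gives $|\Omega_k\cap J|\le\delta_0|J|$, hence $w(\Omega_k)\le(1-\epsilon)\,w(\Omega_{k+1})$ while $|\Omega_{k+1}|\le(2/\delta_0)|\Omega_k|$, and after roughly $\log(|I|/|E|)$ steps one reads off the exponent $\eta$), but be aware that the textbook proof behind the paper's citation avoids this extra stopping-time argument entirely: it runs nested CZ decompositions at geometrically increasing heights $\lambda_k$, uses $w(\Omega_{k+1})\le(1-\epsilon)\,w(\Omega_k)$ directly on the nested sets (no factor $2$ is ever lost), and sums $\int_{\Omega_k\setminus\Omega_{k+1}}w^{1+\delta}\le\lambda_{k+1}^{\delta}\,w(\Omega_k)$. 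Your variant costs one additional iteration up front but buys two real advantages: the constants $C,\delta$ visibly depend only on the $A_p$ constant and not on $I$ (the paper's stated $A_\infty$ definition literally allows $\delta,\epsilon$ to depend on $I$, a defect your derivation from $A_p$ repairs), and, as you correctly observe, the distribution-function route needs no a priori finiteness of $\int_I w^{1+\delta}$, so the usual truncation-and-absorption bookkeeping is avoided.
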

\begin{proof} This lemma is a part of the Corollary 2.13 in J. Garcia-Cuerva and J. L. Rubio de Francia~\cite{gcrdf1} on page 403.
\end{proof}
%%%%%%%%%%%%%%%%%%
Note also that it is  well known and easy to see that $A_p\subset A_r$ for $p<r$.\\
%%%%%%%%%%%%%%%%%%%%%%%%%%%%%%%%%%%%%%%%%%%%%%%%
Let $|I|$ denote the Lebesgue measure of the interval $I$ and let
$$I(f)=\frac{1}{|I|}\int_If.$$
We say that $f\in {\rm{BMO}}_u$ if
$$\frac{1}{|I|}\int_I|f-I(f)|\leq CI(u)$$
for all intervals $I$ in $\mathbb{R}$.\\
The sharp maximal function is defined as
$$f^\sharp (x)=\sup\left\{\frac{1}{|I|}\int_I|f-I(f)|:x\in I, \: I\:\textrm{an interval}\right\}.$$
If $1<p<\infty$, then
$$\|f^\sharp\|_p\approx \|f\|_p,$$
provided $\|f\|_p$ is finite.\\
\indent
Let $T$ be an operator. We say that $T:uL^{\infty}\to{\rm{BMO}}_u$ provided
$$\frac{1}{|I|}\int_I|T(fu)-I(Tfu)|\leq CI(u)\|f\|_{\infty},$$
with $C$ independent of $I$, and we say that $T:L^p(u)\to L^p(u)$ if
$$\|Tf\|_{L^p(u)}\leq C\|f\|_{L^p(u)}.$$
%%%%%%%%%%%%%%%%%%%%%%%%%%%%%%%%%
We can now state and prove our main result:
\begin{thm}\label{sdinter}Let $T$ be an operator and suppose that there exists a positive constant $C$  such that
$$\left(\int_I|Tf(x)|^q\, dx\right)^{1/q}\leq C\left(\int_I|f(x)|^q\, dx\right)^{1/q}$$
for every $q$ which is near enough to $1$ and for every interval $I$ in $\mathbb{R}$ and $f\in L^{\infty}(\mathbb{R})$. Then $T$ maps $uL^{\infty}$ to ${\rm{BMO}}_u$ for all $u\in A_1$.
\end{thm}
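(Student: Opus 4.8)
The plan is to fix an interval $I$ and estimate the $\mathrm{BMO}_u$ oscillation of $T(fu)$ over $I$ directly, reducing everything to the given $L^q$ bound together with the Reverse Hölder Inequality for the $A_1$ weight $u$. First I would discard the subtracted average: since for any integrable $g$ one has $\frac{1}{|I|}\int_I|g-I(g)|\le\frac{2}{|I|}\int_I|g|$, it suffices to prove
$$\frac{1}{|I|}\int_I|T(fu)(x)|\,dx\le C\,I(u)\,\|f\|_{\infty}$$
with $C$ independent of $I$.

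Next I would introduce an exponent $q>1$ close to $1$ (to be fixed at the end) and apply Hölder's inequality on $I$ with exponents $q$ and $q'$, giving
$$\frac{1}{|I|}\int_I|T(fu)(x)|\,dx\le|I|^{-1/q}\left(\int_I|T(fu)(x)|^q\,dx\right)^{1/q}.$$
The hypothesis on $T$ then bounds the right-hand $L^q$ norm by $C\left(\int_I|f(x)u(x)|^q\,dx\right)^{1/q}\le C\|f\|_{\infty}\left(\int_I u(x)^q\,dx\right)^{1/q}$, so that after reinserting the factor $|I|^{-1/q}$ I obtain
$$\frac{1}{|I|}\int_I|T(fu)(x)|\,dx\le C\|f\|_{\infty}\left(\frac{1}{|I|}\int_I u(x)^q\,dx\right)^{1/q}.$$

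The final step is to replace the $L^q$ average of $u$ by its $L^1$ average $I(u)$, and this is where the two hypotheses must be made to cooperate. Since $u\in A_1$, Lemma~\ref{rhi} furnishes some $\delta>0$ with $\left(\frac{1}{|I|}\int_I u^{1+\delta}\,dx\right)^{1/(1+\delta)}\le C\,I(u)$ uniformly in $I$; moreover, by Jensen's inequality the left-hand quantity is nondecreasing in the exponent, so the same bound persists for every $q$ with $1<q\le1+\delta$. I would therefore fix $q$ to be any value in $(1,1+\delta]$ that is also close enough to $1$ for the assumed $L^q$ boundedness of $T$ to apply; with this single choice of $q$ both estimates are simultaneously available, yielding $\left(\frac{1}{|I|}\int_I u^q\,dx\right)^{1/q}\le C\,I(u)$ and hence the desired inequality. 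The main, and essentially the only, obstacle is precisely this matching of exponents: one must check that the range of $q$ for which $T$ is bounded overlaps the reverse-Hölder range $(1,1+\delta]$, which it does because the hypothesis grants boundedness for all $q$ sufficiently near $1$.
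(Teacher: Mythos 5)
Your proposal is correct and follows essentially the same route as the paper's own proof: reduce the oscillation to twice the average of $|T(fu)|$, pass to an $L^q$ average by Hölder, invoke the hypothesis, pull out $\|f\|_{\infty}$, and finish with the Reverse Hölder Inequality for $u\in A_1$. Your explicit verification that the reverse-Hölder range $(1,1+\delta]$ overlaps the range of $q$ for which $T$ is assumed bounded (using monotonicity of $q\mapsto\left(\frac{1}{|I|}\int_I u^q\right)^{1/q}$) is a point the paper handles only implicitly, and it is a welcome extra precision rather than a deviation.
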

\begin{proof} For this let $u\in A_1$ and $f\in L^\infty$. We need to show that given an interval $I$ in $\mathbb{R}$
$$\frac{1}{|I|}\int_I|T(fu)-I(Tfu)|\leq CI(u)\|f\|_{\infty},$$
with a positive constant $C$ independent of $I$.\\
Let $q>1$ be near enough to $1$ so that reverse Hölder inequality holds for $u$ with exponent $q$. Then we have
\begin{align*}
\frac{1}{|I|}\int_I|T(fu)-I(Tfu)|&\leq \frac{1}{|I|}\int_I|T(fu)|+\frac{1}{|I|}\int_I|I(Tfu)|\\
&\leq \frac{1}{|I|}\int_I|T(fu)|+\frac{1}{|I|}\int_I|(Tfu)|\\
&= \frac{2}{|I|}\int_I|T(fu)|\\
&\leq 2\left(\frac{1}{|I|}\int_I|T(fu)|^q\right)^{1/q}\\
&\leq 2C\left(\frac{1}{|I|}\int_I|f|^qu^q\right)^{1/q}\quad (\textrm{by the hypothesis})\\
&\leq 2C\|f\|_{\infty}\left(\frac{1}{|I|}\int_Iu^q\right)^{1/q}\\
&\leq C_1\|f\|_{\infty}\frac{1}{|I|}\int_Iu\quad (\textrm{by reverse Hölder inequality})\\
&=C_1\|f\|_{\infty}I(u),
\end{align*}
where $C_1$ is a positive constant independent of $I$. This shows that $T$ maps $uL^\infty$ to ${\rm{BMO}}_u$.
\end{proof}

\vspace{1cm}
\noindent
Sakin Demir\\
E-mail: sakin.demir@gmail.com\\
Address:\\
İbrahim Çeçen University\\
 Faculty of Education\\
04100 Ağrı, TURKEY.

\end{document}